\documentclass[11pt,a4paper, envcountsame,mathserif]{amsart}
\usepackage[usenames,dvipsnames]{color}

\usepackage[utf8]{inputenc}	
\usepackage{pdfpages,url}

\usepackage{amsmath, amssymb,amsfonts,amsthm, stmaryrd, tikz, tikz-cd}
\usepackage[pagebackref=true,colorlinks]{hyperref}
\hypersetup{
    colorlinks=true,
    linkcolor=blue,
    citecolor=blue,
    }
\usepackage{thmtools}
\usepackage[all,curve]{xy}
\usepackage{thm-restate}
\usepackage{cleveref}

\theoremstyle{definition}
\newtheorem{theorem}{Theorem}

\newtheorem{definition}[theorem]{Definition}

\newtheorem{lemma}[theorem]{Lemma}

\newtheorem{corollary}[theorem]{Corollary}
\DeclareMathOperator{\Low}{Low}

\newcommand{\Ra}{\Rightarrow}

\newcommand{\Lra}{\Leftrightarrow}
\newcommand{\lowpi}{\Low(\Pi^0_1\text{-IM})}

\begin{document}
 \title{A Note on Lowness for $\Pi^0_1$-Immunity} 
\author[D.J. Webb]{David J. Webb}
\address{Chaminade University of Honolulu, 3140 Waialae Avenue
Honolulu, Hawaii 96816}
 \date{\today}
\begin{abstract} We show that the only degree which cannot co-enumerate a non-trivial $\Pi^0_1$-immune real is $\emptyset$, resolving a conjecture of the author. The main ingredient is that if a real $A$ has that all $A$-maximal sets are c.e., then $A$ has c.e.\ degree, which can be proved via a coding mechanism and two classical results of Lachlan.
\end{abstract} 
\pagenumbering{gobble}
\maketitle

We use standard notation from computability theory: see for instance \cite{Soare2016}.

\begin{definition}
A real $X\in 2^\omega$ is \emph{$\Pi^0_1$-immune} iff $X$ has no infinite $\Pi^0_1$\ subsets. Similarly $X$ is \emph{$\Pi^0_1(A)$-immune} iff $X$ has no infinite $\Pi^0_1(A)$\ subsets.
\end{definition}

\begin{definition}
A real $X$ is \emph{low for $\Pi^0_1$-immunity} iff any real $Y$ that is $\Pi^0_1$-immune is also $\Pi^0_1(X)$-immune. The class of such reals is $\lowpi$.
\end{definition}

It is a nice exercise to show that $\Low$(IM), the corresponding notion for standard immunity, is in fact the class of $\Delta^0_1$ reals.

The class $\lowpi$ was investigated in \cite{DJW}, though in a  different guise. There, the notion was named\footnote{Rather poorly, if I'm being honest.} $\Pi^0_1$BN$\Pi^0_1$IM, to denote those reals that do not co-enumerate a non-trivial $\Pi^0_1$-immune real. So to make use of this previous work, we must show these notions are equivalent:

\begin{lemma}\label{lowpichar} $\lowpi = \Pi^0_1$BN$\Pi^0_1$IM.\end{lemma}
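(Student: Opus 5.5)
The plan is to prove both inclusions by contraposition, using the fact that $\Pi^0_1$-immunity passes to infinite subsets. First I fix the reading of the definition from \cite{DJW}: a real $X$ is in $\Pi^0_1$BN$\Pi^0_1$IM iff there is no \emph{infinite} real $Z$ that is $\Pi^0_1(X)$ and $\Pi^0_1$-immune. Here ``non-trivial'' means infinite, since finite sets are vacuously $\Pi^0_1$-immune, and ``co-enumerate'' means having an $X$-c.e.\ complement. If \cite{DJW} phrases non-triviality differently, for instance as coinfinite, I would first check that this is equivalent for the purposes below.

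Next I record the one auxiliary fact: if $Y$ is $\Pi^0_1$-immune and $Z\subseteq Y$, then $Z$ is $\Pi^0_1$-immune. This holds because any infinite $\Pi^0_1$ subset of $Z$ would also be an infinite $\Pi^0_1$ subset of $Y$.

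For $\Pi^0_1$BN$\Pi^0_1$IM $\subseteq \lowpi$, suppose $X\notin\lowpi$. Then some $\Pi^0_1$-immune $Y$ fails to be $\Pi^0_1(X)$-immune, so $Y$ has an infinite $\Pi^0_1(X)$ subset $Z$. By the auxiliary fact, $Z$ is $\Pi^0_1$-immune. Since $Z$ is also infinite and co-$X$-c.e., $X$ co-enumerates a non-trivial $\Pi^0_1$-immune real, so $X\notin \Pi^0_1$BN$\Pi^0_1$IM.

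For $\lowpi\subseteq\Pi^0_1$BN$\Pi^0_1$IM, suppose $X$ co-enumerates an infinite $\Pi^0_1$-immune $Z$. Take $Y=Z$. Then $Y$ is $\Pi^0_1$-immune, but $Y$ is itself an infinite $\Pi^0_1(X)$ subset of $Y$, so $Y$ is not $\Pi^0_1(X)$-immune. Hence $X\notin\lowpi$.

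I expect no real obstacle; the only delicate point is matching the precise definition of ``non-trivial'' used in \cite{DJW}.
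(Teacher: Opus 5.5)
Your proposal is correct and is essentially the paper's proof, with each inclusion argued by contraposition. The two ingredients are the same: $\Pi^0_1$-immunity passes to subsets, and an infinite $\Pi^0_1(X)$ set shows by itself that it is not $\Pi^0_1(X)$-immune; your reading of ``non-trivial'' as ``infinite'' also matches the paper's usage.
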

\begin{proof}
    \textbf{[$\subseteq$]} If $X\in \lowpi$ and $Y\in\Pi^0_1(X)$ is infinite, then by definition $Y$ is not $\Pi^0_1(X)$-immune, and so by the lowness of $X$, not $\Pi^0_1$-immune either.\\
    \\
    \textbf{[$\supseteq$]} Let $X\in \Pi^0_1$BN$\Pi^0_1$IM, $Y$ be $\Pi^0_1$-immune, and $Z$ be a $\Pi^0_1(X)$ subset of $Y$. As $Z$ inherits $\Pi^0_1$-immunity from $Y$ and is co-enumerated by $X$, it must be finite. As $Z$ was arbitrary, $Y$ is $\Pi^0_1(X)$-immune, as desired.
\end{proof}

Much of the work on $\lowpi$ in \cite{DJW} involves maximal sets. Recall:
\begin{definition} A coinfinite c.e.\ set $M$ is maximal iff for all indices $e$, if $M\subseteq W_e$, then $W_e\setminus M$ is finite or $\overline{W_e}$ is.
\end{definition}
When relativizing to $A$-maximality, many elegant results become somewhat less so. For instance:

\begin{lemma}\label{LachlanA}(Lachlan, Theorem 2 of \cite{Lachlan1966}) There are maximal sets $M, N$ which form a minimal pair, i.e. if $B\leq_T M, N$, then $B$ is computable.\end{lemma}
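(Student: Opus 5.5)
We would prove the statement directly by a priority construction that builds $M$ and $N$ simultaneously. It merges the Friedberg $e$-state construction of a maximal set with the Lachlan--Yates minimal pair strategy, organised on a tree of strategies. (In the paper, this result is cited from Lachlan rather than proved.)

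\textbf{Maximality requirements.} For the maximality of $M$ we keep markers $\Gamma^M_0,\Gamma^M_1,\dots$ sitting on the elements $m_0<m_1<\cdots$ of $\overline{M_s}$, in increasing order. The $e$-state of $x$ at stage $s$ is the binary string $\sigma(e,x,s)=\langle W_{0,s}(x),\dots,W_{e,s}(x)\rangle$. The basic action is the usual one. If some $m_j$ with $j>e$ has a lexicographically higher $e$-state than $m_e$, we enumerate into $M$ the elements $m_e,\dots,m_{j-1}$, so that $\Gamma^M_e$ moves to $m_j$. Each marker moves only finitely often. In the limit the $e$-states of the $m_j$ for $j\ge e$ are eventually constant and maximal, and this yields maximality. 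The markers $\Gamma^N_e$ and the set $N$ are handled symmetrically.

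\textbf{Minimal pair requirements.} For each $e$ we need: if $\Phi_e^M=\Phi_e^N=f$ is total, then $f$ is computable. We use Lachlan's length-of-agreement strategy. Let $\ell(e,s)$ be the length of agreement of $\Phi^M_{e,s}$ and $\Phi^N_{e,s}$. A stage is $e$-expansionary if $\ell(e,s)$ exceeds all previous values. Between consecutive $e$-expansionary stages, the strategy allows numbers below the relevant use to enter at most one of $M$ and $N$. Consequently, at each expansionary stage, one side's computation witnessing the agreement survives and keeps the common value. This gives a computable procedure for $f$: wait for an expansionary stage with $\ell>x$, then read off the value. The strategy is placed on a tree with outcomes $\infty<\mathrm{fin}$. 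On the $\infty$ outcome we guess that there are infinitely many expansionary stages, and lower-priority strategies act only at those stages.

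\textbf{Main obstacle: reconciling the two.} The difficulty is that a marker move is a \emph{forced} dump of a block of elements into $M$ (or $N$), possibly below a restrained use. Two observations resolve this.
\begin{enumerate}
\item A maximality requirement never needs to act immediately. The $e$-state of an element only increases, so a desired move that is postponed remains desirable. It can therefore be delayed until the next stage at which it is allowed, that is, the next $e'$-expansionary stage of the relevant higher-priority minimal pair node.
\item At such a stage the minimal pair node only demands that we not dump into \emph{both} sides before the next expansionary stage.
\end{enumerate}
So the marker requirement for $M$ (say $R^M_i$ below a node $\alpha$ with outcome $\infty$) acts at an $\alpha$-expansionary stage, enumerating only into $M$. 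Any pending $N$-moves wait for the following $\alpha$-expansionary stage. The two sides thus alternate. Each pending move is executed eventually, since infinitely many expansionary stages occur along the true path.

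Maximality requirements of higher priority than $\alpha$ act only finitely often, because each marker has at most $2^{e+1}$ possible states. Hence they injure $\alpha$ only finitely often. For them we also need that $\alpha$'s restraint does not block them forever. This holds because $\alpha$ does not restrain higher-priority markers; it simply reinitialises when they move. Verifying this bookkeeping along the true path is the heart of the argument. It consists of showing, by induction on the true path, that each marker settles and that each $f$ with $\Phi_e^M=\Phi_e^N=f$ total is computed by the procedure described above. The remaining verifications are routine finite-injury and $\Pi^0_2$-guessing arguments. They also yield that $M$ and $N$ are co-infinite, since every marker has a limit.
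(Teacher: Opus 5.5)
\textbf{There is a genuine gap, and it is in the part you call routine.} You only say what the maximality nodes below $\alpha^\frown\infty$ do. Your markers $\Gamma^M_i,\Gamma^N_i$ are global, so the versions of $R^M_i$ and $R^N_j$ below $\alpha^\frown\mathrm{fin}$ (or anywhere to the right of $\alpha^\frown\infty$) move the very same markers. These nodes are visited at non-$\alpha$-expansionary stages, possibly infinitely often even when $\infty$ is $\alpha$'s true outcome. A marker move dumps the specific numbers $m_e,\dots,m_{j-1}$ of the current complement. Markers of lower priority than $\alpha$ can sit below $\alpha$'s current uses, so such nodes can put numbers below the uses into both $M$ and $N$ between two consecutive $\alpha$-expansionary stages. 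Then neither side's computation survives, and your procedure for computing $f$ has no justification. In the Lachlan--Yates construction, nodes to the right are harmless only because they are reinitialised at every $\alpha$-expansionary stage and afterwards use fresh numbers larger than that stage. A Friedberg marker move cannot be made with fresh numbers, so that step does not transfer. The fact that each marker moves only finitely often does not rescue it: that gives only finitely many injuries per argument, i.e.\ a $\Delta^0_2$ approximation to $f$, not a computable one.

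\textbf{Restraining those nodes does not repair the sketch as stated.} If every lower-priority maximality move waits for the next $\alpha$-expansionary stage, as your observation (1) suggests, then nothing below $\alpha$ ever acts when $\alpha$ has only finitely many such stages. If instead nodes below $\alpha^\frown\mathrm{fin}$ act but may not enumerate below $\alpha$'s restraint, a new problem appears. Since each requirement is tied to a fixed marker $\Gamma^M_i$, the true-path node for $R^M_i$ can be blocked forever when $\Gamma^M_i$ sits below the finite restraints accumulated along the true path. Those restraints are not bounded in terms of $i$, so nothing in your sketch stops this from happening for infinitely many $i$, and then $M$ need not be maximal.

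\textbf{The missing idea is to localise the $e$-state machinery.} Each maximality strategy should work only with elements of the complement larger than the stage of its last initialisation, which is the analogue of fresh followers. The strategies on the true path must still jointly control all but finitely many elements of $\overline{M}$, while injuring each minimal-pair node below them only finitely often. Maximality tolerates the finitely many elements left out. Making these demands compatible is where the real work in Lachlan's theorem lies; the paper simply cites it and gives no proof.

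A less delicate route is to build a minimal pair of high c.e.\ degrees and then apply Martin's theorem that every high c.e.\ degree contains a maximal set. There the highness requirements are thickness requirements whose eventual action a minimal-pair node can anticipate.
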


Relativizing this result gives the following:

\begin{lemma}\label{L1966a} For every real $A$, there are $A$-maximal $M, N$ which form a minimal pair relative to $A$, i.e. if $B\leq_T M\oplus A, N\oplus A$, then $B\leq_T A$.\end{lemma}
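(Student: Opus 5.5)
This is a direct relativization of Lemma~\ref{LachlanA}, so the plan is to inspect Lachlan's construction and confirm that every ingredient relativizes uniformly to an oracle $A$. Lachlan's proof is a finite-injury (or at worst tree-style) priority construction. It builds two computable enumerations $\{M_s\}$ and $\{N_s\}$ while meeting three families of requirements. The maximality requirements say that for each $e$, either $W_e\setminus M$ or $\overline{W_e}\cup M$ is finite, and similarly for $N$; these are handled by the usual $e$-state (Friedberg) construction, letting markers climb to higher $e$-states. The coinfiniteness requirements keep infinitely many markers permanently outside $M$ and $N$. The minimal pair requirements $R_{i,j}$ say that if $\Phi_i^M=\Phi_j^N=B$ is total, then $B$ is computable. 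These are met by the standard length-of-agreement restraint: whenever a number enters $M$ or $N$, the computations on one side are preserved until agreement is recovered.

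To relativize, I would replace every effective object by its $A$-effective counterpart. The enumerations $W_e$ become $W_e^A$, the stagewise approximations $W^A_{e,s}$ use $A\restriction s$ as oracle, and the functionals become $\Phi_i^{M\oplus A}$ and $\Phi_j^{N\oplus A}$, with use computed relative to the joined oracle. The construction then becomes an $A$-computable sequence of finite stages. It produces sets $M,N$ that are $A$-c.e., since they are enumerated by an $A$-computable procedure, and that are co-infinite. The maximality verification goes through verbatim relative to $A$, so $M$ and $N$ are $A$-maximal.

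For the minimal pair verification, suppose $\Phi_i^{M\oplus A}=\Phi_j^{N\oplus A}=B$ is total. Lachlan's argument computes $B(x)$ by running the construction until a stage at which the length of agreement exceeds $x$ at an expansionary stage, and then reading off the current value. Relativized, this search is $A$-computable, because the construction itself is. The usual argument that such a value is never changed then carries over. At each later stage, at most one side changes below the relevant use, and the other side's computation is preserved until the next expansionary stage. Hence $B\le_T A$.

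The main point needing care, rather than a real obstacle, is to check that Lachlan's proof uses no non-uniform facts about $\emptyset$ that fail relative to $A$. An example would be a priority ordering that depends on knowing which $W_e$ are infinite, as opposed to guessing this on a tree. Since the construction is fully uniform and its verification uses only finite-injury or tree-of-strategies reasoning, the full relativization principle applies. Equivalently, Lachlan's proof is uniform, so relativizing it to an arbitrary oracle gives the result. I would state this explicitly and note that the oracle $A$ appears only as an extra column in every computation.
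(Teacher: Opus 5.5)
Your proposal matches the paper, which gets this lemma in one line by relativizing Lachlan's construction of a minimal pair of maximal sets (Lemma~\ref{LachlanA}) to the oracle $A$, with $W_e$ replaced by $W_e^A$ and $\Phi_i^M$ by $\Phi_i^{M\oplus A}$. There are two small slips in your sketch of Lachlan's construction, neither of which affects the relativization: the second maximality alternative should read $\overline{W_e}\setminus M$ (equivalently, $W_e\cup M$ cofinite) rather than $\overline{W_e}\cup M$, which is never finite; and a minimal-pair construction is genuinely infinite-injury, not finite-injury.
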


Fortunately, a coding trick can be used to find an $A$-maximal set with the same degree as $M\oplus A$:
\begin{theorem}\label{MO} (Implicit in \cite{MO}) If $M$ is $A$-maximal, then there is an\linebreak $A$-maximal $M_1\equiv_T M\oplus A$.
\end{theorem}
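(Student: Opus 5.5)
Given an $A$-maximal $M$, the plan is to code $A$ into $M$ in a way that does not destroy maximality. The natural device is to work along the complement of $M$. Let $m_0<m_1<m_2<\dots$ enumerate $\overline{M}$. This sequence is computable in $M$, and it is infinite because $M$ is coinfinite. We cannot simply put $\overline{M}$-elements into the set according to $A$, since adding infinitely many elements of $\overline{M}$ would break maximality. Instead we code $A$ into \emph{blocks}: partition $\overline{M}$ into consecutive blocks and remove (or keep) information inside a block rather than adding elements. The cleanest concrete choice, following the Martin/Owings-style trick, is to define
\[
M_1 = M \cup \{\, m_{2\langle n, k\rangle+1} : \dots \,\}
\]
in a way that is still $A$-c.e. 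I would rather use a different choice: pass to a set built from $M$ and $A$ via pairing of positions. Specifically, set
\[
M_1 = \{\langle x, i\rangle : x\in M\} \cup \{\langle x, i\rangle : i \neq A\text{-code chosen for } x\}.
\]
Here each $x$ spawns a finite block $\{\langle x,i\rangle\}_{i<2}$, and for $x\notin M$ exactly one element of its block is left out, namely the one indexed by a bit determined by $A$. A more robust version is to take blocks $B_n=\{\langle n,i\rangle : i\le 1\}$ and leave out $\langle n,A(n)\rangle$ when $n\in\overline{M}$.

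The checks proceed as follows.

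\begin{enumerate}
\item $M_1$ is $A$-c.e. We enumerate $\langle n,1-A(n)\rangle$ for every $n$, and the whole block $B_n$ once $n$ enters $M$.
\item $M_1$ is coinfinite, since its complement is $\{\langle n,A(n)\rangle : n\notin M\}$.
\item $M_1$ is $A$-maximal. If $W^A\supseteq M_1$, let $V=\{n : \langle n,A(n)\rangle\in W^A\}$. Then $V$ is $A$-c.e. and contains $M$, so by the $A$-maximality of $M$ either $V\setminus M$ is finite or $\overline V$ is finite. These two cases translate directly into $W^A\setminus M_1$ finite or $\overline{W^A}$ finite.
\item $M_1\le_T M\oplus A$. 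This is immediate from the definition.
\item $M\oplus A\le_T M_1$. We have $n\in M$ iff both elements of $B_n$ lie in $M_1$. For $n\notin M$, $A(n)$ is read off as the unique $i$ with $\langle n,i\rangle\notin M_1$. The remaining case is $n\in M$, and this is where the main obstacle lies, because $A(n)$ is then lost.
\end{enumerate}

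To handle that obstacle, I would code $A(n)$ not at position $n$ but at the $n$-th element of $\overline M$. Let $M_1=\{\langle x,i\rangle : x\in M\}\cup\{\langle m_n,1-A(n)\rangle\}\cup\{\langle x,i\rangle : x \text{ not yet}\dots\}$. This is not $A$-c.e. as written, since $m_n$ is not enumerable. The fix I would try is to use the $M$-computable approximation $m_{n,s}$ and enumerate $\langle m_{n,s},1-A(n)\rangle$ as the approximation changes. When $m_{n,s}$ enters $M$, its whole block gets filled anyway, so no damage results. Decoding then uses $M_1$ to compute $M$ (via full blocks) and hence $m_n$, and reads $A(n)$ from the block of $m_n$. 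Verifying that the moving codes do not spill extra elements into complements of true $m_n$ blocks is the delicate step. I expect that step, together with re-checking maximality under the shifted coding, to be the main work.
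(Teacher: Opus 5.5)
There is a genuine gap: the proposal never produces a construction that is verified to work, and the suggested repair fails at exactly the step you defer.

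Your first coding is fine as far as it goes. It uses two-element blocks $B_n$ and omits $\langle n,A(n)\rangle$ for $n\notin M$. It is $A$-c.e.\ and coinfinite, and your pull-back argument in step 3 correctly transfers $A$-maximality, just as the paper's does. But, as you observe, it records $A(n)$ only for $n\notin M$, so $A\le_T M_1$ fails in general.

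The repair breaks down because of how the approximation moves. The approximation $m_{n,s}$ must be $A$-computable, obtained from an $A$-enumeration of $M$, not $M$-computable. It changes not only when $m_{n,s}$ itself enters $M$, which is the only case you address. It changes whenever \emph{any} smaller element enters $M$. In that case $x=m_{n,s}$ stays out of $M_{s+1}$ and simply acquires a new index $k<n$.

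So a fixed $x\in\overline M$ receives both $\langle x,1-A(n)\rangle$ and $\langle x,1-A(k)\rangle$. Whenever $A(n)\neq A(k)$, its entire block enters $M_1$ although $x\notin M$. Let $F$ be the $A$-c.e.\ set of $x$ whose block gets filled by conflicting codes. Nothing in the construction prevents $F\setminus M$ from being infinite. If it is, then $M\cup F$ is an $A$-c.e.\ superset of $M$ with $(M\cup F)\setminus M$ infinite. By $A$-maximality it is cofinite, so $M_1$ is cofinite. There is no evident bookkeeping that avoids this, since $A$ cannot tell when $m_{n,s}$ has settled.

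The missing idea is redundancy: code initial segments instead of single bits, so that the information about $A$ survives being erased on the unknown set $M$. Use blocks $B_n=\{\langle n,\sigma\rangle : |\sigma|=n\}$ of size $2^n$, and let $\overline{M_1}=\{\langle n,A\upharpoonright n\rangle : n\notin M\}$. Then:
\begin{itemize}
\item Your step 3 goes through verbatim with $V=\{n : \langle n,A\upharpoonright n\rangle\in W^A\}$.
\item $n\in M$ iff $B_n\subseteq M_1$.
\item To compute $A\upharpoonright m$ from $M_1$, search for some $n\ge m$ with $B_n\not\subseteq M_1$, and read off the missing element. Such an $n$ exists because $\overline M$ is infinite.
\end{itemize}
This is precisely the paper's $M\oplus_{S(A)}\omega$. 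Its complement is an infinite subset of the prefix set $S(A)$, and any infinite set of prefixes of $A$ computes $A$.
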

\begin{proof} Recall that the prefix set\footnote{This is also called a Dekker set. Note that we identify strings $\sigma\in2^{<\omega}$ with their codes $n\in\omega$ under a fixed computable bijection.} of $A$ is $S(A) = \{\sigma\in 2^{<\omega}\mid \sigma\prec A\}$,
and given $A$-maximal $M$, define $M_1 = M\oplus_{S(A)} \omega.$ That is, the $n$th $1$ of $S(A)$ is replaced with the $n$th bit of $M$, and $\overline{S(A)}$ is filled in with $1$s. This gives a useful characterization in terms of the principal function of $S(A)$: $$n\in M \Lra p_{S(A)}(n) \in M_1.$$
To see that $M_1$ is $A$-maximal, first notice that $M_1$ is coinfinite and $A$-c.e.\ by inspection. Now for any $e$, $$M_1\subseteq W^A_e \Ra M\subseteq \{n\mid p_{S(A)}(n)\in W_e^A\}.$$
Call this set $V_e^A$ (since it is $A$-c.e.), and suppose that indeed $M_1\subseteq W_e^A$. Since $\overline{S(A)}\subseteq M_1 \subseteq W^A_e$, any member of $W_e^A\setminus M_1$ can be written as some $p_{S(A)}(n)$, so that $W_e^A\setminus M_1$ corresponds exactly (under $p_{S(A)}$) to $V_e^A\setminus M$.

Similarly, as $\overline{W_e^A}\subseteq S(A)$, any $m$ missing from $W_e^A$ corresponds to an index not in $V_e^A$, so that $|\overline{W_e^A}| = |\overline{V_e^A}|$.

Now the $A$-maximality of $M$ is inherited by $M_1$: if $V_e^A\setminus M$ is finite, then $W_e^A\setminus M_1$ is, and if $\overline{V_e^A}$ is finite, then $\overline{W_e^A}$ is.

Onwards to the degree calculation: by construction $\overline{M_1}\subseteq S(A)$, so as any infinite collection of prefixes of $A$ computes $A$, $M_1\geq_T A$.

Thus $M_1\geq_T S(A)$, so that it can compute $p_{S(A)}$. Now for each $n$, $M_1$ can check whether it contains $p_{S(A)}(n)$, and thus whether $n\in M$. So $M_1\geq_T M$.

By definition $M_1\leq_T M\oplus A$, so we have $M_1 \equiv_T M\oplus A$ as desired.
\end{proof}
It is interesting to note that while the simpler choice $N=M\oplus_A\omega$ would still be $A$-maximal, the proof of degree equivalence would have a gap or fail entirely: Soare  showed in \cite{Soare1969} that there are sets $A$ which cannot be computed by any $B\subseteq A$ with $|A\setminus B| = \infty$.

A nice consequence downstream of $A\in \lowpi$ is that $A$-maximal sets are all c.e.\ (\cite{DJW}, Theorem 1 of Section 3.9.1). In fact this property alone is rather strong:
\begin{theorem}
    If every $A$-maximal set is c.e., then $A$ has c.e.\ degree.
\end{theorem}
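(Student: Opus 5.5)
The plan is to combine the relativized minimal pair of Lemma~\ref{L1966a} with the coding trick of Theorem~\ref{MO}, and to finish with a classical fact of Lachlan about c.e.\ degrees. Fix $A$ such that every $A$-maximal set is c.e.

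By Lemma~\ref{L1966a}, choose $A$-maximal sets $M,N$ that form a minimal pair relative to $A$. Apply Theorem~\ref{MO} to each to obtain $A$-maximal sets $M_1\equiv_T M\oplus A$ and $N_1\equiv_T N\oplus A$. By hypothesis $M_1$ and $N_1$ are c.e., so their degrees $\mathbf{m}=\deg(M_1)$ and $\mathbf{n}=\deg(N_1)$ are c.e.\ degrees. Since $A\le_T M_1,N_1$, the degree $\mathbf{a}=\deg(A)$ lies below both.

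Suppose $B\le_T M_1$ and $B\le_T N_1$. Then $B\le_T M\oplus A$ and $B\le_T N\oplus A$, so $B\le_T A$ by relative minimality. Hence $\mathbf{a}$ is the greatest lower bound of $\mathbf{m}$ and $\mathbf{n}$ in the Turing degrees: it is a lower bound, and every lower bound lies below it. So $\mathbf{a}=\mathbf{m}\wedge\mathbf{n}$, an infimum of two c.e.\ degrees taken in the full degree structure $\mathcal{D}$.

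It remains to invoke a second classical result of Lachlan (from the same circle of results as Lemma~\ref{LachlanA}): if two c.e.\ degrees have an infimum in $\mathcal{D}$, that infimum is itself a c.e.\ degree. Applied here, this makes $\mathbf{a}$ c.e., which proves the theorem. The main obstacle is pinning down and citing this fact correctly. I would first look for it in Lachlan's 1966 paper on lower bounds for pairs of recursively enumerable degrees, where I expect it to appear with a proof that lifts the meet into the c.e.\ degrees.

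The only other points to check are routine. Theorem~\ref{MO} gives genuine Turing equivalence, so the relative minimal pair property transfers from $M\oplus A$ and $N\oplus A$ to $M_1$ and $N_1$. And the hypothesis is applied to $M_1$ and $N_1$, not to $M$ and $N$: c.e.-ness of $M$ alone would say nothing about the degree of $M\oplus A$.
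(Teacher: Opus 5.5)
Your proposal is correct and is essentially the paper's own proof. It uses the relativized Lachlan minimal pair and the prefix-set coding of Theorem~\ref{MO} to get c.e.\ $M_1,N_1$ whose infimum in $\mathcal{D}$ is $\deg_T(A)$. It then closes with the fact the paper cites as Lemma 18 of \cite{Lachlan1966}: an infimum in $\mathcal{D}$ of two c.e.\ degrees is itself c.e.
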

\begin{proof}
Let $A$ satisfy the hypothesis, and use \Cref{L1966a} to obtain $A$-maximal $M$ and $N$ forming a minimal pair relative to $A$. By \Cref{MO}, we can obtain $A$-maximal $M_1$ and $N_1$ forming a minimal pair not merely relative to $A$, but \emph{above} $A$: $A\leq_T M_1, N_1$, and if $B\leq_T M_1, N_1$, then $B\leq_TA$. By hypothesis, $M_1$ and $N_1$ are c.e., so their greatest lower bound (i.e. $\deg_T(A)$) is a c.e. degree by Lemma 18 of \cite{Lachlan1966}.
\end{proof}
\begin{corollary}\label{ce-deg}
    If $A\in \lowpi$, then $A$ has c.e.\ degree.
\end{corollary}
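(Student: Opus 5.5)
The corollary follows by chaining two facts already on the table. First, membership in $\lowpi$ forces every $A$-maximal set to be c.e.; this is the cited result (\cite{DJW}, Theorem 1 of Section 3.9.1). Second, the preceding theorem says that if every $A$-maximal set is c.e., then $A$ has c.e.\ degree. So the plan is simply to take $A\in\lowpi$, use \Cref{lowpichar} to regard $A$ as a member of $\Pi^0_1$BN$\Pi^0_1$IM (the guise in which \cite{DJW} states its result), invoke the cited theorem to conclude that all $A$-maximal sets are c.e., and then apply the previous theorem.

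The only point needing care is whether the hypotheses line up exactly. The cited theorem is formulated for reals that do not co-enumerate a non-trivial $\Pi^0_1$-immune real. \Cref{lowpichar} identifies that class with $\lowpi$, so no gap arises there.

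If one wanted the argument self-contained, one would reprove the first fact directly. Let $M$ be $A$-maximal, so $\overline{M}$ is an infinite $\Pi^0_1(A)$ set. By lowness (via the $\subseteq$ direction of \Cref{lowpichar}), $\overline{M}$ is not $\Pi^0_1$-immune, so it has an infinite $\Pi^0_1$ subset $P$. One would then like to conclude that $M$ is c.e. The natural idea is to compare $M$ with $M\cup(\overline M\setminus P)$ using maximality, but that set is not obviously $A$-c.e. Finding the right replacement for this step is the main obstacle in a from-scratch proof, and it is exactly what \cite{DJW} supplies. For the corollary as stated, I would therefore cite it rather than reprove it, making the proof a two-line composition.
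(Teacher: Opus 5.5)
Your proposal is correct and matches the paper's argument. The corollary is the composition of the cited result from \cite{DJW} (for $A\in\lowpi$, every $A$-maximal set is c.e., with \Cref{lowpichar} translating between the two names for the class) and the preceding theorem; like you, the paper simply cites \cite{DJW} for the first step rather than reproving it.
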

\newpage
Finally, we can restate Conjecture 3.8.1 of \cite{DJW} in the new notation and settle it in the affirmative:
\begin{theorem} $\lowpi\ = \Delta^0_1$.\end{theorem}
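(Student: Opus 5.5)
The inclusion $\Delta^0_1\subseteq\lowpi$ is immediate: if $X$ is computable then $\Pi^0_1(X)=\Pi^0_1$, so every $\Pi^0_1$-immune $Y$ is trivially $\Pi^0_1(X)$-immune. For the reverse inclusion I will take $A\in\lowpi$ and argue by contradiction, assuming $A$ is not computable. By \Cref{ce-deg}, $A\equiv_T W$ for some c.e.\ set $W$, which is then noncomputable. Since $\Pi^0_1(A)=\Pi^0_1(W)$, \Cref{lowpichar} reduces the theorem to the following claim.

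\emph{Claim.} Every noncomputable c.e.\ set $W$ co-enumerates an infinite $\Pi^0_1$-immune real. That is, there is a $W$-c.e.\ set $U$ with $\overline U$ infinite such that $U\cap \overline{W_e}\neq\emptyset$ for every $e$ with $\overline{W_e}$ infinite.

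The requirements are therefore those of a simple-set construction, only with $\Pi^0_1$ sets in place of c.e.\ sets. For each $e$ the requirement $R_e$ asks that $U$ meet $P_e=\overline{W_e}$ whenever $P_e$ is infinite. The set $U$ will be $W$-c.e.\ by permitting: a number $x$ goes into $U$ exactly when $W\restriction x$ changes after a designated stage $s_x$. To keep $\overline U$ infinite, I will let $R_e$ act only on numbers $x>2e$ and act at most once. $R_e$ counts as satisfied once some $x$ that it put into $U$ is still in $P_e$. If such an $x$ later enters $W_e$, then $R_e$ becomes unsatisfied again, so I need to check that it is injured only finitely often. To get this I will let each element of $P_{e,s}$ be considered at most once per requirement and argue in the usual finite-injury fashion.

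The verification will show that if $R_e$ fails for an infinite $P_e$, then $W$ is computable. Failure means that for cofinitely many $x$ which remain in $P_e$ forever, $W\restriction x$ never changes after $s_x$.

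This step is the main obstacle. In the classical permitting argument one computes $W\restriction n$ by searching for a candidate $x>n$ and reading off $W_{s_x}\restriction x$. Here, however, we cannot computably recognise which candidates $x$ stay in the $\Pi^0_1$ set $P_e$. My first attempt at a fix is to let $R_e$ issue, at every stage, a fresh candidate from $P_{e,s}$ above each $n$ (a movable marker $x_e(n,s)$). I would then argue that when $W\restriction n$ changes, we are permitted to enumerate the current marker. The marker for $n$ can only move finitely often in a way that forces a genuine loss, since each move is caused by an enumeration into $W_e$ below a bound. If that is right, it gives a computable-in-the-limit but actually computable bound on the settling stage of $W\restriction n$, whenever all permissions are refused.

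If this marker approach fails, the fallback is to use the fact that every noncomputable c.e.\ degree is non-low$_0$ in the appropriate sense. Concretely, its modulus function escapes every computable function infinitely often, and I would use this escaping property to hit each infinite $\Pi^0_1$ set directly.
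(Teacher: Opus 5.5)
Your reduction is the same as the paper's. \Cref{ce-deg} gives $A\equiv_T W$ with $W$ c.e.\ and noncomputable, and then, via \Cref{lowpichar}, everything rests on your Claim that every noncomputable c.e.\ set co-enumerates an infinite $\Pi^0_1$-immune real. The paper does not prove this Claim; it imports it as Theorem 3.8.2 of \cite{DJW}. So all the work in your proposal lies in the Claim, and the construction you describe there \emph{cannot} succeed. You put $x$ into $U$ when $W\restriction x$ changes after a stage $s_x$ fixed by a computable construction, or you enumerate a marker $x_e(n,s)$ upon such a change. That is classical permitting, and its output $U$ is a c.e.\ set. No co-infinite c.e.\ set can work: $\overline U$ is then itself an infinite $\Pi^0_1$ set, say $\overline U=\overline{W_{e_0}}$, and $R_{e_0}$ demands $U\cap\overline U\neq\emptyset$. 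Whatever the details, either some requirement or co-infiniteness fails no matter what $W$ is. Hence your key verification step (``if $R_e$ fails for an infinite $P_e$, then $W$ is computable'') cannot be correct. In particular, the finite-movement claim for markers fails for $R_{e_0}$, since each marker it enumerates thereby leaves $P_{e_0}$. The fallback is not an argument: a $W$-computable function escaping all computable functions says nothing about which candidates stay inside a given $\Pi^0_1$ set.

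The missing idea is a genuinely non-c.e.\ use of the oracle. A $W$-c.e.\ set can hold $x$ through an axiom ``$x\in U$ if $W\restriction m=W_s\restriction m$,'' which a later change of $W\restriction m$ cancels. So $W$-changes can be used to \emph{remove} stale elements from $U$, not only to add elements. For instance, $R_e$ can follow $\min(P_{e,s}\cap(b,\infty))$, which settles when $P_e$ is infinite. It keeps the current candidate in $U$ by renewing such axioms, and relies on $W$-changes to extract abandoned candidates. If $P_e\cap(b,\infty)$ is finite and only finitely many abandoned candidates were ever extracted, then $W$ would be computable. This is because each abandonment is a c.e.\ event, and for an unextracted candidate $x$ the string $W\restriction m_x$ never changes afterwards. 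The extracted numbers are what keep $\overline U$ infinite. Coordinating this across all requirements still needs a priority argument with guessing, which is why the paper simply cites the Claim. Alternatively, the paper's Addendum avoids the Claim entirely. There $A$ is $\Delta^0_2$, and Lachlan's minimal pair $M_0,M_1$ of maximal sets are $A$-maximal, hence $A$-dense simple. So $p_{\overline{M_i}}$ dominates $c_A$, which gives $A\le_T M_0,M_1$, and therefore $A$ is computable.
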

\begin{proof}
\textbf{[$\supseteq$]} This is immediate.\\
\\
\textbf{[$\subseteq$]} Let $A\in \lowpi$. Theorem 3.8.2 of \cite{DJW} (together with \Cref{lowpichar}) allows us to conclude that every c.e.\ $B$ below $A$ is computable: otherwise $B$ could co-enumerate a non-trivial $\Pi^0_1$-immune real, so that $A$ could. As $A$ itself has c.e.\ degree by \Cref{ce-deg}, $A\in\Delta^0_1$.
\end{proof}
\section*{Addendum}
Before the publication of this note, I found a shorter proof! The necessary ingredients are that if $A$ is low for $\Pi^0_1$-immunity, then $A$ is $\Delta^0_2$ and $A$-maximality coincides with maximality \cite{DJW}.
\begin{theorem} $\lowpi\ = \Delta^0_1$.
\end{theorem}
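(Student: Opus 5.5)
We must prove $\lowpi = \Delta^0_1$ using only two facts from \cite{DJW}: every $A \in \lowpi$ is $\Delta^0_2$, and for such $A$ the $A$-maximal sets are exactly the maximal sets. The inclusion $\supseteq$ is immediate as before, since for computable $A$ the $\Pi^0_1(A)$ sets are just the $\Pi^0_1$ sets.

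For $\subseteq$, fix $A \in \lowpi$ and take any $A$-maximal set $M$; \Cref{L1966a} guarantees one exists. Apply \Cref{MO} to get an $A$-maximal $M_1 \equiv_T M \oplus A$; in particular $M_1 \geq_T A$. Because $A$-maximality coincides with maximality, $M_1$ is an (unrelativized) maximal set, hence c.e. So $A$ lies below a maximal set. That alone is not enough, so we use the minimal pair as in the earlier proof.

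By \Cref{L1966a}, choose $A$-maximal $M,N$ forming a minimal pair relative to $A$. \Cref{MO} then gives $A$-maximal $M_1 \equiv_T M\oplus A$ and $N_1 \equiv_T N\oplus A$. These are maximal, hence c.e., and $\deg_T(A)$ is their infimum. This recovers \Cref{ce-deg} without Lachlan's Lemma 18 only if we can argue directly.

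The shorter route is a \emph{second} application of the coincidence. Apply Lachlan's unrelativized \Cref{LachlanA} to get maximal $M,N$ forming a minimal pair. By the coincidence, they are $A$-maximal. Then \Cref{MO} gives $A$-maximal sets $M_1 \equiv_T M \oplus A$ and $N_1 \equiv_T N\oplus A$, which are again maximal and thus c.e.

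The key point is to exploit maximality in the degree structure. The sets $M\oplus A$ and $N\oplus A$ are each c.e. in degree, and $A \le_T$ both. I would then try to use \Cref{L1966a} relative to $A$, together with the fact that $A$-maximal sets are merely maximal, to force $\deg(M_1)$ to range over degrees that are \emph{not} above $A$ unless $A$ is computable. Concretely, the relativized minimal pair $M,N$ over $A$ consists of maximal, hence c.e., sets. Their join with $A$ is the degree of a maximal set. Intersecting over all such pairs should pin $\deg(A)$ down to $\mathbf 0$.

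The main obstacle is the final step: deducing $A \le_T \emptyset$ from "$A$ lies below two maximal sets forming a minimal pair above $A$." I would try to combine this with the $\Delta^0_2$-ness of $A$: pick $A$-maximal sets that are also minimal pairs in the \emph{unrelativized} sense, namely Lachlan's $M,N$, made $A$-maximal by the coincidence. Then one wants $M_1 \equiv_T M$. This should follow because $M$ is $A$-maximal via $M_1$ and maximal sets' degrees are closed under this coding. If that holds, then $A \le_T M, N$, and the minimal pair property gives $A$ computable.
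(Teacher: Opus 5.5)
Your final target is the right one, and it matches the shape of the paper's argument: take Lachlan's maximal minimal pair $M,N$ from \Cref{LachlanA}, note that they are $A$-maximal by the coincidence, and show $A\le_T M$ and $A\le_T N$. But the step that is supposed to deliver $A\le_T M$ is missing. Your justification, that $M_1\equiv_T M$ because ``maximal sets' degrees are closed under this coding,'' proves nothing. \Cref{MO} always yields a set of degree $\deg_T(M\oplus A)$. Learning that this set is maximal only tells you that $\deg_T(M\oplus A)$ contains a maximal set (by Martin's theorem, that it is a high c.e.\ degree), and that is perfectly compatible with $A\not\le_T M$. You also never use that $A$ is $\Delta^0_2$. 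The earlier paragraphs do not close the gap either. The relativized pair from \Cref{L1966a} at best shows that $\deg_T(A)$ is c.e.\ (via Lachlan's Lemma 18, as in the first proof), and you would then still need Theorem 3.8.2 of \cite{DJW}. ``Intersecting over all such pairs'' is not an argument.

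The missing idea is domination. An $A$-maximal set is $A$-dense simple: the principal function $p_{\overline{M}}$ of its complement dominates every $A$-computable function. The paper combines this with an $A$-computable function $c_A$ such that every function dominating $c_A$ computes $A$. It cites Soare for this in the $\Delta^0_2$ case; the fact is immediate when $A$ has c.e.\ degree, since the enumeration modulus of a c.e.\ set of the same degree works. Then $p_{\overline{M}}$ dominates $c_A$, so $A\le_T p_{\overline{M}}\le_T M$, with a finite modification handling the finitely many exceptions. Likewise $A\le_T N$, and the minimal-pair property gives $A\in\Delta^0_1$. \Cref{MO} plays no role in this argument.
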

\begin{proof}
    For the non-trivial direction, fix $A\in \lowpi$, and let $M_0, M_1$ be the maximal sets from \Cref{LachlanA}. Immediately these are $A$-maximal, and so $A$-dense simple, i.e.\ the principal functions $p_{\overline{M_i}}$ dominate every function $f\leq_T A$.  As $A$ is $\Delta^0_2$, there is a function $c_A\leq_T A$ such that every function which dominates $c_A$ computes $A$ (\cite{Soare2016}, Theorem 5.6.6). Thus $c_A$ is dominated by $p_{\overline{M_i}}$, so $A\leq_T p_{\overline{M_i}}\leq_T M_i$. Since the $M_i$ form a minimal pair, $A\in\Delta^0_1$.
\end{proof}
\section*{Acknowledgments and AI Disclosure}
I would like to acknowledge Bj\o rn Kjos-Hanssen for feedback on the (many!) early attempts at proving this conjecture true in the course of my dissertation, and Emanuel Hosu for more recent helpful conversations about this conjecture that reignited my interest in resolving it.

ChatGPT 6 Astra was used for literature search and copyediting --- in particular, it identified the 1966 paper of Lachlan whose lemmata were instrumental in both versions of the proof. I take full responsibility for the mathematical content in this note.

\bibliographystyle{plain}
\bibliography{ref}
\end{document}